\newtheorem*{conjs}{Conjectures}
\newtheorem{thm}{Theorem}[section]
\newtheorem{remrk}[thm]{Remark}
\newtheorem{defn}[thm]{Definition}
\newtheorem{exmp}[thm]{Example}
\newtheorem{lemma}[thm]{Lemma}
\begin{document}
\title[Pencils of Conic-Line Curves]%
{Pencils of Conic-Line Curves}
\author[H. Suluyer]%
{Hasan~Suluyer}
\email{hsuluyer@metu.edu.tr}
\address{Department of Mathematics, Middle East Technical University, Çankaya, Ankara, 06800 Turkey}
\begin{abstract}
In this paper, we study the restrictions on the number $m$ of conic-line curves in special pencils. The most general result we obtain is the relation between upper bounds on $m$  and the number $p$ of concurrent lines in these pencils. We construct a one-parameter family of pencils such that each pencil in the family contains exactly 4 conic-line curves. We also deal with pencils whose conic-line curves are in general position.
\end{abstract}

\maketitle

\let\thefootnote\relax\footnotetext{ \textbf{Keywords:} pencil of curves, conic-line curves, Euler charateristic, multinets, line arrangements\\

\textbf{2024 Mathematics Subject Classification:} 14N20,14H50,55R55   }

\section{Introduction}
A \textit{conic-line curve} is a union of lines or irreducible conics in the complex projective plane. In \cite{cogolludo2021free}, it was proved that the number of such curves in pencils where all singular points of curves with more than one component have only ordinary multiple points is at most 6. Our main goal is to find the upper bound 6 for the number $m$ of conic-line curves in pencils with transverse intersections at the base points by using the number $p$ of curves which are concurrent lines in such pencils. The number $m$ of curves with certain irreducible decompositions in some pencils of degree $d$ curves plays a special role for the existence of $(m,d)-$multinets. By Noether’s AF+BG theorem, there is an equivalence between the existence of Ceva pencils with $m\geq3$ curves whose irreducible components are only lines and the existence of $(m,d)-$multinets \cite{Falk}. Multinets are used in the study of resonance varieties of complements of complex projective line arrangements. For more details, see \cite{Bartz},\cite{stipins},\cite{milnorfibr}. One can prove upper bounds for $m$, independent of $d$, in order for an $(m,d)$-multinet to exist. It is known that $m$ cannot exceed 4. Also, $m=4$ happens when the multinet is a net. The only known $(4,d)-$net is the Hesse arrangement, which is generated by the curves $C_1: x^3+y^3+z^3=0$ and $C_2: xyz=0$. What if the degree of each irreducible component is at most 2? The set of irreducible components of conic-line curves in a pencil forms a conic-line arrangement. The combinatorics and geometry of conic-line arrangements are studied in \cite{pokoraconicline}. Our main result is the following:\\

\textbf{Main Theorem} (see Theorem \ref{results for general (m,d) for any d})
Let $P=\{ \lambda f(x_0,x_1,x_2)+\mu g(x_0,x_1,x_2)=0 \, | \, [ \lambda:\mu] \in \mathbb{CP}^1 \}$ be a pencil of homogeneous polynomials of degree $d>2$ over $\mathbb{C}$ with transverse intersections at base points and $m$ conic-line curves, $p$ of which are \textit{pencils of $d$ lines}, $d$ concurrent lines. Then,
\begin{enumerate} [label=(\roman*)]
    \item $ m\leq 6$;
    \item if $p=1$, then $ m\leq 5$;
     \item if $p=2$, then $ m\leq 4$;
      \item if $p=3$, then $m=3$.
\end{enumerate}
The paper is organized as follows. In section 2, we recall pencils of conic-line curves and fibrations obtained from such pencils. In section 3, we explore restrictions on the number $m$ of conic-line curves in a pencil of conic-line curves by estimating the Euler characteristic of the associated surface. As a result, for any integer $d>2$, we obtain a relation between upper bounds on $m$ and the number $p$ of curves which are $d$ concurrent lines. In section 4, we construct a one-parameter family of pencils such that each pencil contains exactly 4 conic-line curves. Finally in section 5, we look at pencils of odd degree $d$ curves whose conic-line curves are in general position and obtain that $d\geq 11$. Then, we state conjectures about the non-existence of odd degree curves with 6 conic-line curves in general position and about $m$ being less than 5 for $d>4$.

\subsection*{Acknowledgment} The author would like to thank his advisor Ali Ulaş Özgür Kişisel for his helpful comments and valuable discussions. The author is also thankful to A. Libgober and J.I. Cogolludo for their remarks and  useful discussions.

This paper is a part of the author’s Ph.D. thesis \cite{mythesis} at Middle East Technical University.
\section{Preliminaries}

\subsection{Pencil of conic-line curves}
Let $C_1$ and $C_2$ be degree $d$ curves defined by two homogenous polynomials $f$ and $g$ in $\mathbb{C}[x_0,x_1,x_2]$ respectively without a common factor. Then, the \textit{pencil of curves} through $C_1$ and $C_2$ is defined as $$P=\{ \lambda f(x_0,x_1,x_2)+\mu g(x_0,x_1,x_2)=0 \, | \, [ \lambda:\mu] \in \mathbb{CP}^1 \}.$$ 
Any two distinct curves in this pencil meet at the same set $\mathcal{X}$ of points, which is called the \textit{base locus of the pencil}.
 The number $|\mathcal{X}|$ of distinct points in $\mathcal{X}$ is always less than or equal to $d^2$ by Bézout's theorem. When $C_1$ and $C_2$ intersect transversely, $|\mathcal{X}|=d^2$.\\
 
A curve is called a \textit{conic-line curve} if each of its irreducible components is either a line or a conic. So, a conic-line curve is of the form $ \prod_{i=1}^t \alpha_i^{m_i}$ where $\alpha_i$ are distinct linear or irreducible quadratic forms and $m_i \in \mathbb{Z}_{>0}$ for $1\leq i\leq t$.

\begin{defn}
A pencil $P$ of curves is called a \textit{pencil of conic-line curves} (or conic-line pencil) if the pencil contains at least three distinct conic-line curves. 
\end{defn}
Throughout the paper, assume that $P$ is a pencil of degree $d$ conic-line curves intersecting transversely with $|\mathcal{X}|=d^2$.\\

 Let $m$ be the number of conic-line curves $C_i:\{h_i(x_0,x_1,x_2)=0\}$ in the pencil $P$ for $i \in \{1,\cdots,m\}$. In other words, for $m$ different values of $[ \lambda_i : \mu_i] \in \mathbb{CP}^1$, $h_i=\lambda_i f+\mu_i g$ is the product of irreducible polynomials of the degree 1 or degree 2. Let $\mathcal{A}_i$ be the set of irreducible components of the conic-line curve $C_i$ in the pencil $P$.
 \subsection{Natural fibration over $\mathbb{C P}^1$}
There exists a natural fibration corresponding to a pencil $P$ of conic-line curves with $m$ conic-line curves. By blowing up $\mathbb{C P}^2$ at the points of $\mathcal{X}$, we obtain a surface $S$. Say $\psi: S  \rightarrow \mathbb{CP}^2$ is the blow-up map with  the exceptional curves $E_1, \ldots, E_{d^2}$. Then, there is a fibration
$$
\begin{aligned}
\varphi: S & \rightarrow \mathbb{C P}^1 \\
p \in\{\lambda f+\mu g=0\} & \mapsto [ \lambda : \mu]
\end{aligned}
$$
 This fibered surface has $m$ fibers $W_i$, each of which is the strict transform of a conic-line curve in $P$. Each of these fibers is called a \textit{special fiber}. The set of irreducible components of a special fiber $W_i$ consists of rational curves obtained by the strict transform of lines and conics in $\mathcal{A}_i$. So, each conic-line curve $C_i$ in the pencil $P$ corresponds to a fiber $W_i$ of $\varphi$. Say that the special fibers $W_i$ are over $p_1, p_2, \ldots, p_m \in \mathbb{C P}^1$. There may also be singular fibers of $\varphi$ which are not special. These fibers are called \textit{non-special}. Then, each exceptional curve $E_i$ intersects exactly one rational curve in $\varphi^{-1}\left(p_j\right)$ for any $j$.
 
\section{The Number of Conic-Line Curves in $P$}
In order to bound the number $m$ from above, we estimate the Euler characteristic $e(S)$ of the fibered surface $S$ by calculating the Euler characteristics of smooth and special fibers of $\varphi$ separately. The next lemma provides suitable bounds for the Euler characteristics of special fibers.

\begin{lemma}
\label{characteristics bounds}
   Let $P$ be a pencil of degree $d$ conic-line curves with $|\mathcal{X}|=d^2$.   For any special fiber $W_i$ of $\varphi$, $$\frac{d(5-d)}{2}-q_i \leq e(W_i) \leq d+1 $$ where $q_i$ is the number of irreducible conics in $\mathcal{A}_i$ of the conic-line curve $C_i$ in $P$ corresponding to $W_i$ for each $i$.
\end{lemma}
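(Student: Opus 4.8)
The plan is to identify the topology of the special fiber $W_i$ with that of the reduced plane curve $C_i^{\mathrm{red}}\subset\mathbb{CP}^2$, and then to estimate $e(C_i^{\mathrm{red}})$ combinatorially from the components of $C_i$ and their pairwise intersections, using B\'ezout to express everything in terms of $d$ and $q_i$. The first step is to observe that the transversality hypothesis forces every conic-line curve $C_i$ in $P$ to be reduced of degree $d$ and smooth along $\mathcal{X}$. Indeed, at a base point $x$ the intersection multiplicity $I_x(C_i,C)$ with a generic member $C$ equals $1$; writing $C_i=\prod_j\alpha_j^{m_j}$ and using $I_x(C_i,C)=\sum_j m_j\,I_x(\alpha_j,C)$, exactly one $\alpha_j$ vanishes at $x$, it does so with $m_j=1$, and it is a line or a smooth conic, so $C_i$ is smooth at $x$. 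Hence the union $R$ of the multiplicity-one components of $C_i$ contains all $d^2$ base points; since two distinct members of $P$ share no component (the pencil has no common factor), B\'ezout gives $d^2=|\mathcal{X}|\le |R\cap C|\le d\cdot\deg R$, so $\deg R\ge d$. As $\deg R\le\deg C_i^{\mathrm{red}}\le d$, every component of $C_i$ occurs with multiplicity one and $C_i$ is reduced of degree $d$.

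Because $C_i$ is smooth at every point of $\mathcal{X}$, blowing up these points leaves the curve unchanged near the exceptional locus, so the strict transform $W_i$ is isomorphic to $C_i=C_i^{\mathrm{red}}$ and $e(W_i)=e(C_i^{\mathrm{red}})$. Now write $\mathcal{A}_i=\{Z_1,\dots,Z_{\ell_i}\}$ with $a_i$ lines and $q_i$ conics, so $\ell_i=a_i+q_i$ and $a_i+2q_i=d$. Each $Z_k\cong\mathbb{CP}^1$ is smooth, so the normalization of $C_i^{\mathrm{red}}$ is $\bigsqcup_k Z_k$ and
$$ e(C_i^{\mathrm{red}}) = 2\ell_i - \sum_{p}(n_p-1), $$
the sum running over the singular points $p$ of $C_i^{\mathrm{red}}$, where $n_p\ge 2$ is the number of components through $p$. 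For the upper bound, since $C_i^{\mathrm{red}}$ is a connected plane curve, a spanning-tree argument on the incidence graph of the components gives $\sum_p(n_p-1)\ge\ell_i-1$ (equivalently $b_1(C_i^{\mathrm{red}})\ge 0$ in $e=b_0-b_1+b_2=1-b_1+\ell_i$), whence $e(W_i)\le\ell_i+1\le d+1$, using $\ell_i\le a_i+2q_i=d$.

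For the lower bound, note that $n_p-1\le\binom{n_p}{2}$ for every $n_p\ge 2$, and that each pair of components through $p$ meets at $p$, so
$$ \sum_p(n_p-1) \ \le\ \sum_p\binom{n_p}{2} \ \le\ \sum_{1\le j<k\le\ell_i}\deg Z_j\cdot\deg Z_k \ =\ \frac{d^2-\sum_k(\deg Z_k)^2}{2} \ =\ \frac{d^2-(a_i+4q_i)}{2}, $$
using B\'ezout for the second inequality and $\deg Z_k\in\{1,2\}$ with $(\deg Z_k)^2\in\{1,4\}$ and $\sum_k\deg Z_k=d$. Substituting into the normalization formula and eliminating $a_i$ via $a_i=d-2q_i$ gives
$$ e(W_i) \ \ge\ 2\ell_i-\tfrac12\bigl(d^2-a_i-4q_i\bigr) \ =\ \tfrac{5a_i}{2}+4q_i-\tfrac{d^2}{2} \ =\ \frac{d(5-d)}{2}-q_i, $$
as claimed. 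I expect the main obstacle to be the first step, pinning down that every conic-line curve in $P$ is reduced of degree $d$ and smooth along $\mathcal{X}$: without it neither the identification $e(W_i)=e(C_i^{\mathrm{red}})$ nor the degree bookkeeping $a_i+2q_i=d$, $\sum_k(\deg Z_k)^2=a_i+4q_i$ that drives both inequalities would be available, whereas the remaining steps are routine normalization and B\'ezout computations.
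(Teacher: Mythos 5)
Your proof is correct and follows essentially the same route as the paper: the normalization formula $e(W_i)=2(\#\text{components})-\sum_p(n_p-1)$, with the correction term bounded above by the pairwise B\'ezout numbers $\sum_{j<k}\deg Z_j\deg Z_k=\binom{d}{2}-q_i$ to get the lower bound on $e(W_i)$, and bounded below via connectedness to get the upper bound. The only substantive differences are to your credit: you justify that $|\mathcal{X}|=d^2$ forces each $C_i$ to be reduced and smooth along the base locus (which the paper uses implicitly), and your single connectivity inequality $\sum_p(n_p-1)\ge\ell_i-1$ replaces the paper's less tidy case split on $q_i=0$ versus $q_i\neq 0$.
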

\begin{proof}
  Say the irreducible components of the conic-line curve $C_i$ corresponding to $W_i$ contains $q_i$ irreducible conics. Then, the remaining irreducible components are $d-2q_i$ lines. As Euler characteristics of strict transforms of lines and conics in $\mathbb{CP}^2$ is 2,
  \begin{eqnarray*}
  e(W_i) &=& 2(d-2q_i)+2q_i- \displaystyle\sum_{p \in W_i} (r_p-1)\\
  &=& 2d-2q_i- \displaystyle\sum_{p \in W_i} (r_p-1) 
  \end{eqnarray*}
  where $r_p$ is the number of strict transforms of lines and irreducible conics through $p$ in the pencil and $r_p\geq1$ as $p\in W_i$. To find the minimum value of $e(W_i)$, we need to find the maximum value of $\displaystyle\sum_{p \in W_i} (r_p-1)$ where $r_p-1 \geq 0$. Each point $p\in W_i$ with $r_p=r>2$ is an intersection point of strict transforms of $r$ lines and smooth conics, and the contribution of the point $p\in W_i$ to this sum is $r-1$. If there were transverse intersections among strict transforms of $r$ lines and smooth conics, there would be $r\choose 2$ points with multiplicities 2 and $$r-1 < {r\choose 2}(2-1)=(r-1)\frac{r}{2}.$$ 
  So, $$\displaystyle\sum_{p \in W_i} (r_p-1)< \displaystyle\sum_{p \in W_i} {r_p\choose 2}.$$
  If each $p\in W_i$ with $r_p=r>1$ is an intersection point of strict transforms of $r$ lines and irreducible conics without common tangent at $p$, the upper bound $\displaystyle\sum_{p \in W_i} {r_p\choose 2}$ becomes $|I|$ where $I$ is the set of intersection points of $d-2q_i$ lines and $q_i$ irreducible conics such that they intersect transversely at $p$ for any intersection point $p$, which are called \textit{lines and conics in general position}. If there is a common tangent at some of the intersection point $p\in W_i$, $$\displaystyle\sum_{p \in W_i} {r_p\choose 2} < |I|.$$

  Then, $$\displaystyle\sum_{p \in W_i} (r_p-1)\leq \displaystyle\sum_{p \in W_i} {r_p\choose 2}\leq |I|. $$  The set $I$ consists of points each of which is a transverse intersection of two lines, two irreducible conics or one line and one irreducible conic. So, for $W_i$ in general position,
  \begin{eqnarray*}     
      |I|&=& {d-2q_i \choose 2}+4{q_i \choose 2}+2(d-2q_i)q_i\\
           &=& \frac{d^2-d-2q_i}{2}={d \choose 2}-q_i.\\ 
  \end{eqnarray*}
  Then, the number of nodes in the special fiber $W_i$ is
  \begin{equation}
  \label{nodes in special fibers}
      {d \choose 2}-q_i.
  \end{equation}
   Therefore, $$e(W_i) = 2d-2q_i-|I|= \dfrac{d(5-d)}{2}-q_i \text{ at most}.$$ Now, we try to find the maximum value of $e(W_i)$ by minimizing $\displaystyle\sum_{p \in W_i} (r_p-1)$. \\
   If $q_i=0$, the irreducible components of the conic-line curve $C_i$ consists of $d$ distinct lines and so these lines have to be a \textit{pencil of lines}, the set of lines meeting at a single point, to minimize $\displaystyle\sum_{p \in W_i} (r_p-1)$." So,
   $$e(W_i)=2d-2q_i-\displaystyle\sum_{p \in W_i} (r_p-1) = 2d-(d-1)=d+1 .$$ If $q_i \neq 0$, 
   \begin{eqnarray*}
       e(W_i)&=&2d-2q_i-\displaystyle\sum_{p \in W_i} (r_p-1)\\
       &=&2d-2q_i-\left(\displaystyle\sum_{p \in I_1} (r_p-1)+\displaystyle\sum_{p \in I_2} (r_p-1)\right)
   \end{eqnarray*}
   where $I_1$ is the set of intersection points where at least one conic passes and $I_2$ is the set of intersection points where only lines meet among the irreducible components of $C_i$. Then,
   $$e(W_i)<2d-2q_i-\displaystyle\sum_{p \in I_2} (r_p-1)\leq 2d-2q_i -(d-2q_i-1) = d+1 $$
 since $\displaystyle\sum_{p \in I_1} (r_p-1)>0$ for $q_i \neq 0$ and $\displaystyle\sum_{p \in I_2} (r_p-1)$ is minimum when $d-2q_i$ lines form a pencil. Hence, $$e(W_i)\leq d+1 .$$ 
 \end{proof}
The following lemma characterizes when the maximum value $d+1$ for the Euler characteristic of a special fiber is attained.
 
\begin{lemma}\label{special case e(W_i)=d+1}
Let $P$ be a pencil of degree $d$ conic-line curves with $|\mathcal{X}|=d^2$. For any special fiber $W_i$ of $\varphi$, $e(W_i)=d+1$  if and only if $W_i$ is the strict transform of a pencil of $d$ lines, i.e. $d$ lines meeting at a single point.
\end{lemma}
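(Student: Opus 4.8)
The plan is to prove both directions by exploiting the explicit formula $e(W_i)=2d-2q_i-\sum_{p\in W_i}(r_p-1)$ established in the proof of Lemma \ref{characteristics bounds}, together with the case analysis carried out there. The ``if'' direction is essentially already contained in that proof: if $W_i$ is the strict transform of $d$ concurrent lines, then $q_i=0$, there is a single point $p$ with $r_p=d$ and no other intersection points among the components, so $\sum_{p\in W_i}(r_p-1)=d-1$ and $e(W_i)=2d-(d-1)=d+1$. I would state this in a sentence or two, referring back to the computation in the previous lemma.

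For the ``only if'' direction I would argue contrapositively: assume $W_i$ is \emph{not} the strict transform of a pencil of $d$ lines, and show $e(W_i)\le d$. I would split into the two cases from the previous proof. First, if $q_i\neq 0$: the proof of Lemma \ref{characteristics bounds} already gives the \emph{strict} inequality $e(W_i)<d+1$, because $\sum_{p\in I_1}(r_p-1)>0$ whenever a conic is present; since $e(W_i)$ is an integer this forces $e(W_i)\le d$. Second, if $q_i=0$ but the $d$ lines are not concurrent: then the dual graph / incidence structure of the $d$ lines is not a single pencil, so $\sum_{p\in W_i}(r_p-1)$ strictly exceeds its minimal value $d-1$. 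The cleanest way to see this is the standard fact that for $d$ lines in $\mathbb{CP}^2$, $\sum_p\binom{r_p}{2}=\binom{d}{2}$ with equality of the coarser sum $\sum_p(r_p-1)=d-1$ only when all lines pass through one point; equivalently, $\sum_p (r_p-1)$ counts $d$ minus the number of connected components contributed, and the arrangement of $d$ lines ``uses up'' exactly $d-1$ only in the concurrent case. Hence $\sum_p(r_p-1)\ge d$ and $e(W_i)\le 2d - d = d$.

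The main obstacle is making the combinatorial claim in the $q_i=0$ case fully rigorous and self-contained: namely that $d$ non-concurrent lines force $\sum_p(r_p-1)\ge d$. I would handle this by a short direct argument. Pick any line $\ell$ among the $d$ lines; the other $d-1$ lines meet $\ell$ in some set of points, and at each such point $p$ on $\ell$ the local contribution to $\sum(r_p-1)$ is at least the number of the remaining lines through $p$; summing over $p\in\ell$ accounts for at least $d-1$ from the incidences along $\ell$ alone, and since the lines are not all concurrent there is at least one further intersection point not on $\ell$ (two of the remaining $d-1$ lines meet somewhere, and if that point lay on $\ell$ all lines through it together with $\ell$ would be concurrent there, which one can iterate). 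This yields the extra $+1$. Alternatively, and perhaps more transparently, I would invoke that $e$ of the strict transform of a \emph{reduced} plane curve of degree $d$ with only ordinary singularities equals $2d - \sum_p(r_p-1)$ and that among degree-$d$ line arrangements the maximum Euler characteristic of the strict transform is $d+1$, attained uniquely by the pencil — a fact that can be cited or proved by the blow-up computation. I would close the proof with the one-line synthesis: in every case other than the concurrent-lines configuration, $e(W_i)\le d<d+1$, completing the characterization.
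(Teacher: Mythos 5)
Your proof is correct and follows essentially the same route as the paper: the ``if'' direction by direct computation, and the ``only if'' direction by first excluding $q_i\neq 0$ via the $I_1,I_2$ splitting and strict inequality from Lemma~\ref{characteristics bounds}, then handling the case of $d$ lines. Your only real addition is that you make rigorous the final step --- that $d$ non-concurrent lines force $\sum_p(r_p-1)\geq d$ (a dual de Bruijn--Erd\H{o}s-type count) --- which the paper asserts in one line without justification, so your write-up is, if anything, more complete on that point.
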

\begin{proof}
  It is clear that if $W_i$ is the strict transform of a pencil of $d$ lines, then these $d$ lines meet at a single point outside $\mathcal{X}$. Then, $W_i$ is a pencil of $d$ rational curves and $e(W_i)=2d-(d-1)=d+1$. Conversely, assume that $e(W_i)=d+1$. Suppose that the irreducible components of the conic-line curve $C_i$ corresponding to $W_i$ contains $q_i$ irreducible conics. Then, the other irreducible components are $d-2q_i$ lines. First, we want to exhibit that $q_i=0$ when $e(W_i)=d+1$. Assume that $q_i \neq 0$. Then, 
   \begin{eqnarray*}
       e(W_i)&=&2d-2q_i-\displaystyle\sum_{p \in W_i} (r_p-1)\\
       &=&2d-2q_i-\left(\displaystyle\sum_{p \in I_1} (r_p-1)+\displaystyle\sum_{p \in I_2} (r_p-1)\right)
   \end{eqnarray*}
   where $I_1$ is the set of intersection points where at least one conic passes and $I_2$ is the set of intersection points where only lines meet among the irreducible components of $C_i$. By Lemma \ref{characteristics bounds},
   $$e(W_i)< 2d-2q_i-\displaystyle\sum_{p \in I_2} (r_p-1) \leq 2d-2q_i-(d-2q_i-1)= d+1 $$
 since $\displaystyle\sum_{p \in I_1} (r_p-1)>0$ as $q_i \neq 0$. Due to our assumption that $e(W_i)=d+1$, we deduce that $q_i = 0$. Then, the irreducible components of the conic-line curve $C_i$ consists of $d$ distinct lines. To obtain $e(W_i)=d+1$, they have to be a pencil of lines.
 \end{proof}
 
We get an upper bound on $m$ similar to the upper bound obtained in Theorem 3.2 in \cite{Yuz-1} by estimating the Euler characteristic of $S$, by making use of the above bounds on the Euler characteristics of special fibers.

\begin{thm} \label{results for general (m,d) for any d}
Let $P=\{ \lambda f(x_0,x_1,x_2)+\mu g(x_0,x_1,x_2)=0 \, | \, [ \lambda:\mu] \in \mathbb{CP}^1 \}$ be a pencil of homogeneous polynomials of degree $d>2$ over $\mathbb{C}$ with transverse intersections at base points and $m$ conic-line curves, $p$ of which are \textit{pencils of $d$ lines}, i.e. $d$ concurrent lines. Then,
\begin{enumerate} [label=(\roman*)]
    \item $ m\leq 6$;
    \item if $p=1$, then $ m\leq 5$;
     \item if $p=2$, then $ m\leq 4$;
      \item if $p=3$, then $m=3$.
\end{enumerate}
    
\end{thm}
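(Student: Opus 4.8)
The plan is to bound the Euler characteristic $e(S)$ of the blown-up surface in two ways and compare. On one hand, $S$ is $\mathbb{CP}^2$ blown up at $d^2$ points, so $e(S)=3+d^2$. On the other hand, using the fibration $\varphi\colon S\to\mathbb{CP}^1$, a general fiber $F$ is the strict transform of a smooth degree $d$ curve, which has $e(F)=2-2\binom{d-1}{2}=-d^2+3d$. The standard fibration formula gives
\begin{equation*}
e(S)=e(\mathbb{CP}^1)e(F)+\sum_{\text{fibers }F'}\bigl(e(F')-e(F)\bigr)=2e(F)+\sum_{i=1}^m\bigl(e(W_i)-e(F)\bigr)+\sum_{\text{non-special}}\bigl(e(F')-e(F)\bigr),
\end{equation*}
where the non-special singular fibers each contribute a strictly positive amount (any singular fiber has Euler characteristic strictly larger than the general one). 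Dropping the non-special contribution yields an inequality, and substituting $e(S)=3+d^2$ and $e(F)=-d^2+3d$ gives $\sum_{i=1}^m\bigl(e(W_i)+d^2-3d\bigr)\ge 3+d^2-2(-d^2+3d)=3d^2-6d+3=3(d-1)^2$, i.e. $\sum_{i=1}^m e(W_i)\ge 3(d-1)^2+m(3d-d^2)$.

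Next I would feed in the bounds on $e(W_i)$ from Lemma~\ref{characteristics bounds} and Lemma~\ref{special case e(W_i)=d+1}. Exactly $p$ of the special fibers are pencils of $d$ lines, each contributing $e(W_i)=d+1$ (and this is the only way to achieve $d+1$, by Lemma~\ref{special case e(W_i)=d+1}); the remaining $m-p$ special fibers have $e(W_i)\le d$ in fact, but more usefully one should use the \emph{lower} bound $e(W_i)\ge \frac{d(5-d)}{2}-q_i$ together with the \emph{upper} bound $e(W_i)\le d+1$ to turn the displayed inequality into an inequality purely in $m$, $p$, $d$. The cleanest route is: the fibration identity is an equality, so upper-bounding each $e(W_i)$ from above gives an upper bound on $e(S)=3+d^2$ in terms of $m$ and $p$. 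Concretely, $3+d^2 \le 2e(F) + p(d+1-e(F)) + (m-p)(\,d+1-e(F)\,)$ would be too weak; instead one must bound the non-pencil special fibers by something smaller, or—and this is the real mechanism—use that when $e(W_i)$ is close to its maximum, the fiber is forced to be a pencil of lines, so among the $m-p$ non-pencil special fibers one has $e(W_i)\le d$, or even use the quantitative gap from the proof of Lemma~\ref{characteristics bounds}.

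The key estimate, then, is to combine: $e(S)=3+d^2$, the fibration sum, $e(W_i)=d+1$ for the $p$ pencil fibers, and $e(W_i)\le d$ for the other $m-p$ special fibers, plus positivity of non-special contributions, to get an inequality of the shape $3+d^2 \le 2e(F) + p(d+1-e(F)) + (m-p)(d-e(F))$, which rearranges to $3+d^2 \le -e(F)(2-m) + \text{(linear in }p,m,d)$; since $-e(F)=d^2-3d$ grows quadratically and the left side only quadratically with the same leading term, the coefficient of the quadratic part forces $m\le 6$, with the finer count in $p$ coming from the $+1$ versus $+0$ discrepancy between pencil and non-pencil special fibers. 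For parts (ii)–(iv) I would simply track the arithmetic: each pencil fiber is ``worth one more'' than a generic-position conic-line fiber, so increasing $p$ tightens the bound on $m$ by roughly the same amount, yielding $m\le 6-p$ for $p=1,2$ and collapsing to $m=3$ (the minimum for a conic-line pencil, by definition) when $p=3$. A small separate argument handles the boundary: when $p=3$ one must also rule out a fourth conic-line curve, which follows because three pencils of $d$ lines already force enough of the base locus structure that a fourth special fiber would violate the Euler characteristic budget.

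The main obstacle I anticipate is making the bound $e(W_i)\le d$ for non-pencil special fibers rigorous and tight enough: Lemma~\ref{special case e(W_i)=d+1} tells us $e(W_i)<d+1$ for such fibers, hence $e(W_i)\le d$ only if one knows $e(W_i)$ is an integer—which it is, since $W_i$ is a finite CW complex—so this is fine, but one must double-check that no non-special singular fiber is accidentally counted among the $W_i$ and that the ``$+1$ per non-special fiber'' positivity is genuinely strict (it is, as any singular fiber of a fibration of curves has strictly larger Euler characteristic than the generic fiber). A secondary subtlety is the case $d$ small (e.g. $d=3,4$) where the lower bound $\frac{d(5-d)}{2}-q_i$ may exceed $d+1$ or behave degenerately; one should verify the argument still closes, or invoke the known classification in those low-degree cases separately.
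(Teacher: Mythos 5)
Your setup assembles the right ingredients---$e(S)=3+d^2$, the fibration formula, positivity of each singular fiber's contribution, and Lemmas~\ref{characteristics bounds} and~\ref{special case e(W_i)=d+1}---but the logical direction of your key step is inverted, and this is a genuine gap rather than a typo. Dropping the (positive) non-special contributions from the identity $e(S)=2e(F)+\sum_{F'}\bigl(e(F')-e(F)\bigr)$ makes the right-hand side \emph{smaller}, so the correct conclusion is $\sum_{i=1}^m\bigl(e(W_i)-e(F)\bigr)\le e(S)-2e(F)=3(d-1)^2$, not $\ge$ as you wrote. Consequently the useful bounds on the special fibers are \emph{lower} bounds on $e(W_i)$: each special fiber must consume at least a certain share of the fixed budget $3(d-1)^2$, and $m$ is bounded by how many shares fit. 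Your proposed ``cleanest route''---upper-bounding each $e(W_i)$ to get an upper bound on $e(S)$---cannot work: the non-special singular fibers contribute unknown positive amounts, so upper bounds on the $e(W_i)$ yield no upper bound on $e(S)$; and even granting that there were no non-special singular fibers, the inequality $3+d^2\le 2e(F)+m\bigl(d+1-e(F)\bigr)$ rearranges to $3(d-1)^2\le m(d-1)^2$, i.e.\ $m\ge 3$, which bounds nothing from above.

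The actual mechanism, and the paper's proof, is this: a pencil-of-lines fiber contributes exactly $d+1-e(F)=(d-1)^2$ to the budget, i.e.\ one third of it, while by the \emph{lower} bound of Lemma~\ref{characteristics bounds} every other conic-line fiber contributes at least $\tfrac{d(5-d)}{2}-q_i-e(F)=\binom{d}{2}-q_i\ge\binom{d}{2}-\lfloor d/2\rfloor$, which is $\tfrac{(d-1)^2}{2}$ for $d$ odd---roughly one sixth of the budget. Hence $p(d-1)^2+(m-p)\tfrac{(d-1)^2}{2}\lesssim 3(d-1)^2$ gives $2p+(m-p)\le 6$, i.e.\ $m\le 6-p$. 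Your heuristic that a pencil fiber is ``worth one more'' because of the $d+1$ versus $d$ discrepancy misidentifies the gap by an order of magnitude: the relevant comparison is between $d+1$ and the \emph{minimum} $\tfrac{d(5-d)}{2}-q_i$, a difference of about $\tfrac{(d-1)(d-2)}{2}$, quadratic in $d$. Finally, your sketch omits the control of $\bar q=\sum q_i$; in the paper this requires the bound $q_i\le\lfloor d/2\rfloor$ and a parity split on $d$ (with a separate short computation for $d$ even using $\tfrac{1}{d(d-2)}\le\tfrac18$), without which the $\bar q$ term could erode the $-p$ correction that parts (ii)--(iv) depend on.
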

\begin{proof}
First of all, $0 \leq p \leq m$. By Lemma \ref{special case e(W_i)=d+1}, the Euler characteristic of each special fiber corresponding to a pencil of $d$ lines is $d+1$. Without loss of generality, $e(W_i)=d+1$ for $i=1,2,...,p.$ Let us estimate $e(S)=3+d^2$ by using Euler characteristics of special fibers of $S$. As the generic fiber is a smooth degree $d$ curve, its Euler characteristics is always less than that of the other singular fibers and its Euler characteristic is $2-2g$ where $$g=\dfrac{(d-1)(d-2)}{2}.$$ So, it is equal to $3d-d^2$. Then, we obtain
$$3+d^2\geq (2-m)(3d-d^2)+p(d+1)+(m-p)\dfrac{d(5-d)}{2}-\displaystyle\sum_{i=p+1}^m q_i$$ by using Lemma \ref{characteristics bounds}. Say $\bar q=\displaystyle\sum_{i=p+1}^m q_i$. Then,

\begin{eqnarray*}  
    3+d^2 &\geq& (2-m)(3d-d^2)+p(d+1)+(m-p)\dfrac{d(5-d)}{2}-\bar q \\ 
    &\geq& m\dfrac{d(d-1)}{2}+p\dfrac{(d-1)(d-2)}{2}-\bar q+6d-2d^2.
\end{eqnarray*}
So, $$ m\dfrac{d(d-1)}{2}\leq 3(d-1)^2-p\dfrac{(d-1)(d-2)}{2}+\bar q.$$
As $d>1$,
\begin{equation}
\label{ineq: m}
    m \leq  6\dfrac{d-1}{d}-p\dfrac{d-2}{d}+\bar q\dfrac{2}{d(d-1)}   
\end{equation}
Now, let us find an upper bound for $\bar q$. As it is clear that $q_i\leq \left\lfloor\dfrac{d}{2}\right\rfloor \text{ for any } i$,  $$\bar q \leq (m-p) \left\lfloor\dfrac{d}{2}\right\rfloor.$$
First, assume that $d$ is an odd integer. Then, $$\bar q\leq (m-p)\dfrac{d-1}{2} \text{ for any } i.$$ Then, inequality \eqref{ineq: m} becomes
\begin{eqnarray*}
      m &\leq&  6\dfrac{d-1}{d}-p\dfrac{d-2}{d}+\dfrac{m-p}{d}=\dfrac{d-1}{d}(6-p)+\dfrac{m}{d}. 
\end{eqnarray*} 
We deduce that 
\begin{equation}
\label{ineq: m for odd d}
    m \leq  6-p.
\end{equation}
As $0\leq p\leq m \leq  6-p$, $p$ is less than or equal to 3. So, $p \leq m\leq 6 \text{ for any odd } d.$  By using inequality \eqref{ineq: m for odd d}, we can conclude the last three statements. \\

When $d$ is even, $\bar q\leq (m-p)\dfrac{d}{2} \text{ for any } i$ and $d\geq 4$. Then, inequality \eqref{ineq: m} becomes  \begin{eqnarray*}
      m &\leq&  6\dfrac{d-1}{d}-p\frac{d-2}{d}+\dfrac{m-p}{d-1}=6\dfrac{d-1}{d}-p\dfrac{d^2-2d+2}{d(d-1)}+\dfrac{m}{d-1}. 
\end{eqnarray*}
So, \begin{eqnarray*}
m&\leq& 6\dfrac{(d-1)^2}{d(d-2)}-p\dfrac{d^2-2d+2}{d(d-2)}=6\left(1+\dfrac{1}{d(d-2)}\right)-p\left(1+\dfrac{2}{d(d-2)}\right)\\
&<& \dfrac{27}{4}-p
\end{eqnarray*}
since $0<\frac{1}{d(d-2)}\leq  \frac{1}{8}$ for $d\geq 4$. Since $m$ and $p$ are positive integers,
$$m<\dfrac{27}{4}-p \implies m\leq 6-p.$$ Therefore, the theorem holds for even integer values of $d$ as well.

\end{proof}
The following pencil is an example  for the case of $p =3$ and $d = 3$ in Theorem \ref{results for general (m,d) for any d} and it consists of exactly $m=3$ conic-line curves.
\begin{exmp} Consider the Fermat pencil 
    $$P=\{ \lambda (x^d-y^d)+\mu (y^d-z^d)=0 \, | \, [ \lambda:\mu] \in \mathbb{CP}^1 \}$$ for $d>2$. Then $|\mathcal{X}|=d^2$. The special fibers of $P$ corresponding to the conic-lines  $x^d-y^d=0, y^d-z^d=0$ and $x^d-z^d=0$  have Euler characteristics $d+1$ as the irreducible components of each of these conic-lines are $d$ distinct lines through a single point outside the base locus. So, $p \geq 3$. By Theorem \ref{results for general (m,d) for any d}, $p=m=3$. Therefore, the Fermat pencil has no conic-line except the curves $x^d-y^d=0,\,y^d-z^d=0$ and $x^d-z^d=0$. In fact, each fiber of $S$ except the fibers corresponding these special fibers is smooth due to the equality $$e(S)=3+d^2=(2-3)(3d-d^2)+3\big(2d-(d-1)\big).$$
\end{exmp}
\section{A Family of Pencils of Conic-Line Curves with $m=4$}
The Hesse pencil is the only known pencil with four curves whose irreducible components are lines only up to projective isomorphism. It is conjectured to be the unique $(4,d)$-net, for instance see \cite{stipinphd},\cite{gunturkunpd}. On the other hand, there are infinitely many pencils with four conic-line curves. We have constructed the following family of pencils in the case of $p=0,d=3$ and $m=4$.
\begin{thm} \label{pencils P_a for (4,3) } For any $a\in \mathbb{C}-\{0,1\} $, the pencil $P_a=\{ \lambda (x-ay)(x^2+y^2-z^2)+\mu (x-y)(x^2-xy+y^2-z^2)=0 \, | \, [ \lambda:\mu] \in \mathbb{CP}^1 \}$  over $\mathbb{C}$ has exactly $4$ conic-line curves whose equations are: 
\begin{align*} 
    C_1&: (x-ay)(x^2+y^2-z^2)=0,\\
    C_2&: (x-y)(x^2-xy+y^2-z^2)=0,\\
    C_3&:y ((2-a) x^2 -xy+(1-a) y^2 + (a - 1)z^2)=0,\\
    C_4&:x ((1-a) x^2 + a x y +(1- 2 a) y^2 + (a- 1)z^2)=0.
\end{align*}

\end{thm}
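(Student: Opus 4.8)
The plan is to verify the claimed identity in the pencil directly and then argue that these four are the only conic-line members. First I would exhibit the pencil relation explicitly: write $f=(x-ay)(x^2+y^2-z^2)$ and $g=(x-y)(x^2-xy+y^2-z^2)$, expand both, and check that the given $C_3$ and $C_4$ equations are $\mathbb{C}$-linear combinations $\lambda_3 f+\mu_3 g$ and $\lambda_4 f+\mu_4 g$ for suitable $[\lambda_i:\mu_i]$. From the shape of $C_3$ (divisible by $y$) and $C_4$ (divisible by $x$) one reads off the candidate parameters: $C_3$ should be the member of the pencil vanishing on the line $y=0$, so $[\lambda_3:\mu_3]$ is determined by requiring the coefficient combination to kill all monomials not divisible by $y$ in $\lambda f+\mu g$ — equivalently, $\lambda f|_{y=0}+\mu g|_{y=0}=0$, i.e. $\lambda\, x(x^2-z^2)+\mu\, x(x^2-z^2)=0$, giving $[\lambda_3:\mu_3]=[1:-1]$; similarly $C_4$ is the member vanishing on $x=0$, forcing $\lambda f|_{x=0}+\mu g|_{x=0}= -\lambda a y(y^2-z^2) - \mu y(y^2-z^2)=0$, giving $[\lambda_4:\mu_4]=[1:-a]$. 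I would then simply record that substituting these values into $\lambda f+\mu g$ and dividing by $y$ (resp. $x$) yields precisely the stated cubics in $C_3$ (resp. $C_4$), which is a routine expansion; the hypotheses $a\neq 0,1$ guarantee the four parameter values $[0:1],[1:0],[1:-1],[1:-a]$ are pairwise distinct and that the linear/quadratic factors in each $C_i$ are genuinely of the asserted type (e.g. the quadratic factor of $C_3$ does not degenerate or acquire $y$ as a factor).

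Next I would confirm these curves actually are conic-line curves, i.e. that each factors into a line times an irreducible conic (and is not, say, a product of a line and two more lines, nor a line tripled). For $C_1$ and $C_2$ the quadratic factors $x^2+y^2-z^2$ and $x^2-xy+y^2-z^2$ have non-degenerate $3\times 3$ symmetric matrices, hence are smooth conics. For $C_3$ and $C_4$ I would compute the discriminant (determinant of the associated symmetric matrix) of the quadratic factor as a polynomial in $a$ and check it is nonzero for $a\neq 0,1$, possibly after excluding finitely many further bad values — and here one must be careful: if the discriminant vanishes at some $a_0\notin\{0,1\}$ the theorem as stated would be false, so I expect the determinant to factor as a constant times a power of $a$ and $a-1$ (or to be checked to have no other roots), and verifying this is the one genuinely delicate computational point. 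One also checks $x\nmid$ (quadratic factor of $C_4$) and $y\nmid$ (quadratic factor of $C_3$), else the "conic" would split off another line.

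Finally, to see there are \emph{no other} conic-line curves in $P_a$, I would invoke the Euler-characteristic machinery of Section 3: first check $|\mathcal{X}|=d^2=9$, i.e. that $f=0$ and $g=0$ meet transversally in $9$ points (a Bézout/Jacobian computation, or a direct count), so that $P_a$ falls under the standing hypotheses and Theorem \ref{results for general (m,d) for any d} applies with $d=3$. Since none of $C_1,\dots,C_4$ is a pencil of $3$ concurrent lines — each has an irreducible conic component, so $q_i\geq 1$ and by Lemma \ref{special case e(W_i)=d+1} its fiber has $e(W_i)<d+1=4$ — we are in the case $p=0$, where $e(W_i)=2d-2q_i-\sum(r_p-1)\le 2d-2q_i - 0$; in fact for $d=3,q_i=1$ one gets $e(W_i)\le 4-\sum(r_p-1)$, and the relevant bound from Lemma \ref{characteristics bounds} is $e(W_i)\le d+1$ with strict inequality here, while the lower bound is $e(W_i)\ge d(5-d)/2-q_i = 3-q_i$. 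Plugging $p=0$, $d=3$, and $q_i=1$ for every special fiber into the estimate $e(S)=3+d^2=12 \ge (2-m)(3d-d^2)+\sum_{i=1}^m e(W_i)$ and using $e(W_i)\le 4$ gives $12\ge -m\cdot(3\cdot3-9)+\dots$; more to the point, the genus-drop accounting forces the total Euler characteristic contributed by the fibers to leave no room for a fifth special fiber whose $e(W_i)\ge 3-q_i\ge 1$. I would run this arithmetic to conclude $m\le 4$, hence $m=4$ exactly. The main obstacle I anticipate is the discriminant check in the previous paragraph — ensuring the quadratic factors of $C_3$ and $C_4$ stay irreducible for \emph{all} $a\in\mathbb{C}\setminus\{0,1\}$ rather than merely generically — together with verifying the transversality $|\mathcal{X}|=9$ uniformly in $a$, since a drop in $|\mathcal{X}|$ at special values of $a$ would invalidate the appeal to Theorem \ref{results for general (m,d) for any d}.
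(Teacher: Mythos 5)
Your verification that $C_3$ and $C_4$ lie in the pencil at $[1:-1]$ and $[1:-a]$ (by restricting to $y=0$ and $x=0$) is correct and matches the paper, and your worry about checking $|\mathcal{X}|=9$ for all $a$ is legitimate. The discriminant check you flag as delicate is actually not needed for the statement as such: by the paper's definition a conic-line curve may have line components, so even if a quadratic factor of $C_3$ or $C_4$ degenerated into two lines the curve would still be a conic-line curve (degeneracy would only matter for the later remark about general position).

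The real problem is your uniqueness argument. The Euler-characteristic machinery of Section 3 cannot yield $m\leq 4$ here: with $p=0$, Theorem \ref{results for general (m,d) for any d} gives only $m\leq 6$, and for $d=3$ this bound is actually attained (Ruppert's pencil, cited at the end of Section 5, has $d=3$ and $m=6$ conic-line curves), so no refinement of that arithmetic can rule out a fifth or sixth special fiber. Concretely, for $d=3$ the generic fiber has $e=0$, so the constraint is $\sum_{\text{sing.\ fibers}} e(F)=12$; each line-plus-conic special fiber contributes $e(W_i)\geq 3-q_i=2$, so four of them account for at least $8$ and there is ample room for a fifth contributing $2$ (your claim that the accounting ``leaves no room'' is false, and your bound $3-q_i\geq 1$ should in any case read $\geq 2$). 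The paper's proof of uniqueness is instead a direct incidence argument on the base locus: the four line components $x-ay$, $x-y$, $y$, $x$ of $C_1,\dots,C_4$ are concurrent at the base point $[0:0:1]$, each carries exactly $3$ base points, and together they cover all $9$ points of $\mathcal{X}$ as $1+2+2+2+2$. A fifth conic-line cubic would have to contain a line $\ell$ (odd degree), $\ell$ would have to contain $3$ base points, and one checks that no such line exists: $\ell$ cannot pass through $[0:0:1]$ without forcing more than $9$ base points, and if it misses $[0:0:1]$ its three base points and the resulting incidences force a common component among members of the pencil, a contradiction. You need an argument of this combinatorial kind (or some other member-by-member analysis) in place of the Euler-characteristic estimate.
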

\begin{proof}
    First of all, $C_1,C_2,C_3$ and $C_4$ are conic-line curves and they are found in the pencil when the $[ \lambda:\mu]$ values are $[1:0],[0:1],[1:-1]$ and $[1:-a]$ respectively. Then, we can find $4$ distinct lines each of which is an irreducible component of different conic-line curves. These lines intersect at the point $[0:0:1]$ in the base locus $\mathcal{X}$. As each of these lines consists of 3 points from $\mathcal{X}$ and the point $[0:0:1] \in \mathcal{X}$ is the intersection point of all these 4 lines, the other 2 points of each of line in $\mathcal{X}$ are different from the other lines. Therefore, the number of points on these 4 lines in $\mathcal{X}$ is $$1+2+2+2+2=9=|\mathcal{X}|.$$ 
    If the given pencil had one more conic-line curve, it would have an irreducible component $l$ with degree 1 due to odd degree. This $l$ would consist of 3 points from $\mathcal{X}$. Since there is at least one line among the first 4 lines through any point in the base locus and $[0:0:1]$, $l$ cannot pass through $[0:0:1]$. If $l$ does not pass through $[0:0:1]$, it would have three points from $\mathcal{X}$, each of which is on three of four different lines. Then, the remaining line would also be an irreducible component of a different conic-line curve  which would contradict that there exists no common component in the curves of the pencil $P$. 
   So, there are exactly 4 conic-line curves. Moreover, if $a \neq \frac{1}{2} $ or $2$ as well, all above conic-line curves are in general position. For $a=\frac{1}{2}$, $C_4$ only and for $a=2$, $C_3$ only are not in general position. 
\end{proof}
These pencils have singular curves different from these four conic-line curves. As an example of this, we find all singular curves in the pencil $P_2$. These are 4 conic-line curves and 3 nodal curves.
\begin{exmp}
      Consider the pencil $$P_2=\{ \lambda (x-2y)(x^2+y^2-z^2)+\mu (x-y)(x^2-xy+y^2-z^2)=0 \, | \, [ \lambda:\mu] \in \mathbb{CP}^1 \}.$$ 
     The special curves of this pencil are 
     \begin{align*}
         C_1&: (x-2y)(x^2+y^2-z^2)=0, \\ C_2&: (x-y)(x^2-xy+y^2-z^2)=0, \\ C_3&: y(-xy- y^2 + z^2)=0, \\ 
         C_4&: x(-x^2+2xy-3y^2+z^2)=0 
     \end{align*} when  $[\lambda:\mu]$ values are  $[1:0],[0:1],[1:-1]$, and $[1:-2]$ respectively as in Figure \ref{pencil of curves with m=4 }. Also, this pencil has 3 non-special singular fibers, each of which  has a node as a singularity. The irreducible components of each of $C_1,C_2$ and $C_4$ are one line and one irreducible conic meeting at two distinct points while those of $C_3$ are one line and one irreducible conic meeting at a single point. So, the Euler characteristics of the fibers corresponding to conic-line curves $C_1,C_2,C_3$ and $C_4$ are $2,2,3$ and $2$, respectively. The Euler characteristics of each of the non-special singular fibers is 1. As the Euler characteristics of generic fiber is $3d-d^2=0$ for the degree $d=3$, the following equality holds:
    $$e(S)=3+3^2=12= (2-6)\cdot 0 +\underbrace{2+3+2+2}_{\text{Euler Char. of special fibers }} + 3 \cdot 1 .$$ 
\end{exmp}
 \begin{figure}[hbt!]
  
	\centering                                                     \includegraphics[width=0.5\linewidth]{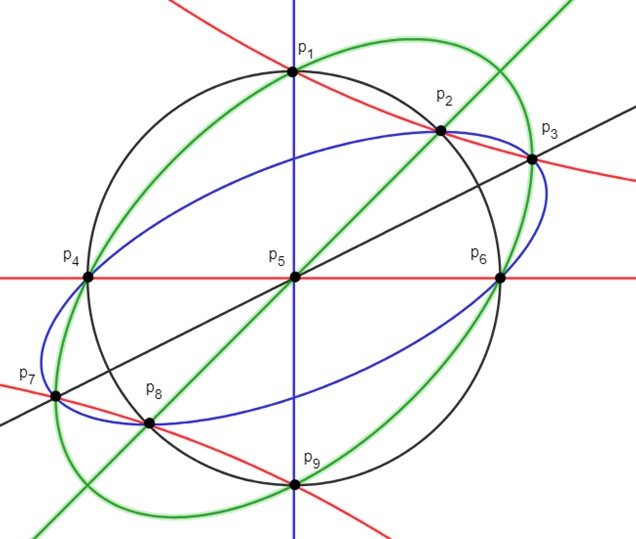}   
\caption{The conic-line curves of the pencil $P_2$}
\label{pencil of curves with m=4 }
\end{figure}
\section{Conic-Line Curves in General Position}
Our focus in this section shifts to the conic-line pencils whose conic-line curves are in general position. 
\begin{defn}
    A conic-line curve is in general position if its irreducible components intersect transversely and no three of them are concurrent.
\end{defn}
Our first observation in this section is on the contribution of the singular points in the non-special fibers of $S$ to its Euler characteristic.

\begin{lemma}
\label{contr. of non-special fibers to characteristic}
Let $P$ be a pencil of conic-line curves of odd degree $d$ over $\mathbb{C}$ with $m$ conic-line curves. If each of the conic-line curves is the union of lines and conics in general position, then the contribution of the singular points in the non-special fibers of the fibered surface $S$ to $e(S)$ is at most $\frac{(d-1)^2(6-m)}{2}$.
    
\end{lemma}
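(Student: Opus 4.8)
The plan is to feed the computation of $e(S)$ through the additivity of the topological Euler characteristic along the fibration $\varphi\colon S\to\mathbb{CP}^1$ and to solve for the non-special part. Let $F$ denote a generic fibre, a smooth plane curve of degree $d$, so that $e(F)=2-2g=3d-d^2$ with $g=\frac{(d-1)(d-2)}{2}$, and recall $e(S)=3+d^2$. Since $S$ is smooth, $e(\mathbb{CP}^1)=2$, and the only singular fibres of $\varphi$ are the $m$ special fibres $W_1,\dots,W_m$ together with finitely many non-special ones, additivity gives
$$e(S)=2\,e(F)+\sum_{i=1}^m\bigl(e(W_i)-e(F)\bigr)+N,$$
where $N$ is the sum of $e(F_t)-e(F)$ over the non-special singular fibres $F_t$; this excess is exactly what is meant by ``the contribution of the singular points in the non-special fibres to $e(S)$'' (for a reduced fibre it equals the sum of the Milnor numbers of its singular points). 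Rearranging,
$$N=3(d-1)^2+m\,(3d-d^2)-\sum_{i=1}^m e(W_i).$$

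Next I would evaluate each $e(W_i)$ exactly, using that every conic-line curve of $P$ is a union of lines and conics in general position. Since $d$ is odd, the curve corresponding to $W_i$ is the union of its $q_i$ irreducible conics and $d-2q_i\ge 1$ distinct lines, and being in general position means all its intersection points are transverse nodes with no three components concurrent. In the notation of the proof of Lemma \ref{characteristics bounds} this forces $r_p=2$ at every such point, hence $\sum_{p\in W_i}(r_p-1)=|I|=\binom{d}{2}-q_i$, so the lower bound of Lemma \ref{characteristics bounds} is attained:
$$e(W_i)=2d-2q_i-\Bigl(\binom{d}{2}-q_i\Bigr)=\frac{d(5-d)}{2}-q_i .$$
(No $W_i$ can be a pencil of $d$ lines, since $d\ge 3$ concurrent lines violate general position, so this formula applies uniformly to all $m$ special fibres.)

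Substituting $\sum_{i=1}^m e(W_i)=m\,\frac{d(5-d)}{2}-\sum_{i=1}^m q_i$ and simplifying the coefficient of $m$, namely $(3d-d^2)-\frac{d(5-d)}{2}=-\frac{d(d-1)}{2}$, we get
$$N=3(d-1)^2-m\,\frac{d(d-1)}{2}+\sum_{i=1}^m q_i .$$
Finally, since $d$ is odd each conic-line curve has at most $\lfloor d/2\rfloor=\frac{d-1}{2}$ irreducible conics, so $\sum_{i=1}^m q_i\le m\,\frac{d-1}{2}$, whence
$$N\le 3(d-1)^2-m\,\frac{d(d-1)}{2}+m\,\frac{d-1}{2}=3(d-1)^2-m\,\frac{(d-1)^2}{2}=\frac{(d-1)^2(6-m)}{2},$$
as claimed. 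The genuinely delicate step is the first one: one must correctly identify the singular fibres of $\varphi$ and justify the Euler-characteristic additivity formula on the smooth surface $S$, and then check that the intended ``contribution of the singular points in the non-special fibres'' is precisely the excess $N$ appearing there (rather than, e.g., $\sum e(F_t)$ or a count of singular points); once this bookkeeping is fixed, the rest is the elementary manipulation above. It is reassuring that, because $m\le 6$ by Theorem \ref{results for general (m,d) for any d}, the resulting bound $\frac{(d-1)^2(6-m)}{2}$ is nonnegative, consistent with $N\ge 0$.
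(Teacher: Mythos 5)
Your proposal is correct and follows essentially the same route as the paper: both compute the non-special contribution as $e(S)-2e(F)-\sum_i\bigl(e(W_i)-e(F)\bigr)$, use the general-position hypothesis to pin down each special fibre's excess as exactly $\binom{d}{2}-q_i$ nodes (equivalently $e(W_i)=\frac{d(5-d)}{2}-q_i$), and then bound $\sum q_i\le m\,\frac{d-1}{2}$ using that $d$ is odd. Your write-up is if anything slightly more careful in making the additivity bookkeeping explicit.
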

\begin{proof}
     By using formula \eqref{nodes in special fibers}, we calculate the number $\delta _s$ of nodes in special fibers as  $$m{d \choose 2}-\displaystyle\sum_{i=1}^m q_i.$$ Then, the contribution of singular points in the non-special fibers of $S$ to the Euler characteristic of $S$ is
 $$ e(S)-e(\mathbb{C P}^1)\cdot e(\Sigma_g)-\delta _s$$ where $g=\frac{(d-1)(d-2)}{2}$, as each node increases the Euler characteristics by 1. So, this contribution is 
 \begin{eqnarray*}
    3+d^2-2(-d^2+3d)-\delta _s &=&3(d-1)^2-m{d \choose 2}+\displaystyle\sum_{i=1}^m q_i\\
     &=&3(d-1)^2-m\frac{d(d-1)}{2}+ \displaystyle\sum_{i=1}^m q_i.
    \end{eqnarray*}
    Since $d$ is odd, we can use the upper bound $\frac{d-1}{2}$ for each $q_i$. Then, 
\begin{eqnarray*}
     3(d-1)^2-m\frac{d(d-1)}{2}+ \displaystyle\sum_{i=1}^m q_i &\leq& 3(d-1)^2-m\frac{d(d-1)}{2}+m\frac{d-1}{2}\\
     &\leq& \frac{(d-1)^2}{2}\big(6-m\big).     
\end{eqnarray*}
\end{proof}
\begin{remrk} When $d$ is even, $q_i \leq \frac{d}{2}$ and the contribution of the singular points in the non-special fibers to $e(S)$ is at most $3(d-1)^2-m\frac{d(d-2)}{2}$ which is bigger than the bound in Lemma \ref{contr. of non-special fibers to characteristic}. Also, this bound becomes 3 if $m=6$.
\end{remrk}
The next observation is about the distribution of conics and lines in the set of irreducible components of such conic-line curves in a hypothetical odd degree conic-line pencil with $m=6$.
\begin{thm}
    \label{possibilities for (6,d) in general pos}  Let $P$ be a pencil of conic-line curves of odd degree $d$ over $\mathbb{C}$ with $m=6$ conic-line curves. If each of the conic-line curves is a union of lines and conics in general position, then  
\begin{enumerate} [label=(\roman*)]
    \item the singular curves of the pencil are only these conic-line curves,
    \item each conic-line curve in the pencil is a union of 1 line and $\frac{d-1}{2}$  irreducible conics.
    \item there exists at least 1 conic passing through each point in $\mathcal{X} $ as an irreducible component of a conic-line curve if $d=3$.
\end{enumerate} 

\end{thm}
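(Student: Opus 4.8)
The plan is to read off parts (i) and (ii) from the equality case of Lemma~\ref{contr. of non-special fibers to characteristic} at $m=6$, and then to settle (iii) by an elementary Bézout count on the base locus $\mathcal{X}$.

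For (i) and (ii) I would first specialise Lemma~\ref{contr. of non-special fibers to characteristic} to $m=6$: it bounds the contribution $N$ of the singular points of the non-special fibres to $e(S)$ by $\frac{(d-1)^2(6-6)}{2}=0$. Since $N=\sum_F\bigl(e(F)-e(F_{\mathrm{gen}})\bigr)$ over the non-special singular fibres $F$, and a singular fibre of a fibration of curves has strictly larger Euler characteristic than the generic one, each term is positive, so $N\ge 0$; hence $N=0$ and there are no non-special singular fibres. As the pencil is transverse along $\mathcal{X}$, every member is smooth on $\mathcal{X}$, so a singular member other than the six conic-line curves would retain its singularity in the strict transform and yield a non-special singular fibre; since there is none, (i) follows. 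For (ii) I would use the exact value of $N$ computed inside the proof of that lemma, namely $N=3(d-1)^2-6\cdot\tfrac{d(d-1)}{2}+\sum_{i=1}^{6}q_i=\sum_{i=1}^{6}q_i-3(d-1)$: as $d$ is odd, $q_i\le\frac{d-1}{2}$ gives $\sum_{i=1}^{6}q_i\le 3(d-1)$, so $N=0$ forces $q_i=\frac{d-1}{2}$ for each $i$, whence each conic-line curve is a union of $\frac{d-1}{2}$ irreducible conics and $d-2\cdot\frac{d-1}{2}=1$ line.

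For (iii) I would set $d=3$, so $|\mathcal{X}|=9$ and, by (ii), each conic-line curve is $C_i=\ell_i\cup Q_i$ with $\ell_i$ a line and $Q_i$ an irreducible conic. Fixing any other member $C_j$ of the pencil and using that all intersections along $\mathcal{X}=C_i\cap C_j$ are transverse, Bézout applied to the two components of $C_i$ gives $|\ell_i\cap\mathcal{X}|=3$ and $|Q_i\cap\mathcal{X}|=6$; since $C_i$ contains all of $\mathcal{X}$ and $3+6=9=|\mathcal{X}|$, these subsets are disjoint, so every base point lies on exactly one of $\ell_i,Q_i$. The lines $\ell_1,\dots,\ell_6$ are pairwise distinct because the pencil has no common component, so any two of them through a base point $p$ meet only at $p$; thus if $k$ of the $\ell_i$ pass through $p$ they account for $1+2k$ distinct base points, forcing $1+2k\le 9$, i.e. $k\le 4$. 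Hence at least $6-k\ge 2$ of the conics $Q_i$ pass through $p$, which proves (iii).

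I expect the only subtle points to be, in (i), the fact that each non-special singular fibre contributes positively to $e(S)$ --- standard for fibrations of curves, and already visible in the earlier examples (e.g. the three nodal non-special fibres of $P_2$) --- and, in (iii), the Bézout bookkeeping that pins down how the nine base points split between the line and the conic of each $C_i$ and shows the two parts are disjoint; the remaining manipulations are just the inequalities assembled above.
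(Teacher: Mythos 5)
Your proposal is correct and follows essentially the same route as the paper: parts (i) and (ii) are obtained exactly as in the paper's proof, by combining the non-negativity of the contribution of non-special singular fibres with the upper bound $0$ from Lemma~\ref{contr. of non-special fibers to characteristic} and then forcing $\sum_{i=1}^{6}q_i=3(d-1)$ with $q_i\leq\frac{d-1}{2}$. Part (iii) is the same base-point count as well --- the paper argues by contradiction that all six lines cannot be concurrent at a base point since that would require $6(d-1)+1>d^2$ base points, while you bound the number of lines through a base point by $4$, which is marginally sharper but the identical argument.
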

 \begin{proof} 
   By above Lemma \ref{contr. of non-special fibers to characteristic}, the contribution of singular points in the non-special fibers of $S$ to Euler characteristics of $S$ is at most 0 when $m=6$. As this contribution is always non-negative as well, this is zero. It means that each non-special fiber is smooth. Therefore, the singular curves of the pencil are only these conic-line curves.\\
   
 By the previous statement, there exists no singular fiber of $\varphi$ except the special fibers. So, $$ e(S)-e(\mathbb{C P}^1)\cdot e(\Sigma_g)=3(d-1)^2$$ is the number $\delta _s$ of nodes in the special fibers of $S$. By formula \eqref{nodes in special fibers}, 
  \begin{eqnarray*}
       \delta_s=3(d-1)^2=6{d \choose 2}-\displaystyle\sum_{i=1}^6 q_i \implies \displaystyle\sum_{i=1}^6 q_i &=&3d(d-1)-3(d-1)^2\\
        &=&3(d-1).
  \end{eqnarray*}
   Moreover, we  know that the irreducible components of each conic-line curve contains at most  $\frac{d-1}{2}$ conics and so $$\displaystyle\sum_{i=1}^6 q_i\leq 3(d-1).$$ 
   So, $\displaystyle\sum_{i=1}^6 q_i$ takes its maximum value $3(d-1)$ whenever the irreducible components of each conic-line curve contains the largest number of conics. Therefore, each of the conic-line curves is a union of 1 line and $\frac{d-1}{2}$ conics in general position. \\
   
     Assume to the contrary that there exists a point $p$ in $\mathcal{X}$ such that the lines each of which is an irreducible component of a different conic-line curve meet at $p$ and $d=3$. Then, the number of base points on these lines is at least $6(d-1)+1=6d-5$. So,
  $$6d-5\leq d^2 \implies 0\leq(d-1)(d-5).$$ Then, $d\geq 5$, a contradiction.
  
 \end{proof}
 If $m<6$,  the contribution of the singular points in the non-special fibers of the fibered surface $S$ may not be zero. 
 
  Ruppert’s example in \cite{cogolludo2021free} is an example for the case $m=6$ and $d=3$. We cannot find a pencil $P$ of odd degree $d>3$ curves with $6$ conic-line curves, each of which is in general position. In fact, we cannot find a pencil $P$ of degree $d> 4$ curves with $m=5$ or 6. We make the following conjectures.
  \begin{conjs}
   There does not exist a pencil $P$ of odd degree $d>3$ curves with $6$ conic-line curves, each of which is in general position. The number $m$ has to be less than or equal to 4 for a pencil of conic-line curves for large values of $d$.
  \end{conjs}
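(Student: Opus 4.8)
Since these are conjectures, the following is a plan of attack rather than a proof; in both parts the idea is to upgrade the Euler-characteristic bookkeeping of Sections 3 and 5 with the finer incidence geometry of the conic-line arrangement and of the fibration $\varphi$.

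For the first conjecture I would begin from the rigid configuration already forced by Theorem \ref{possibilities for (6,d) in general pos}: for odd $d$ with $m=6$ in general position, every non-special fiber is smooth, and the arrangement is exactly $6$ lines and $3(d-1)$ conics, each special fiber carrying $\binom{d}{2}-\frac{d-1}{2}=\frac{(d-1)^2}{2}$ nodes by formula \eqref{nodes in special fibers}, none on $\mathcal{X}$. The first key step is the observation that, since $|\mathcal{X}|=d^2$ forces every member of the pencil to be smooth and pairwise transverse at each base point, every base point is an ordinary sextuple point of the arrangement, one branch from each of the six fibers. Thus the global arrangement of $6$ lines and $3(d-1)$ conics has exactly $t_6=d^2$ ordinary sextuple points and $t_2=3(d-1)^2$ nodes and no other singularities; this is the new piece of rigidity I would try to exploit, since $d^2$ sextuple points is a heavy demand on so few components.

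I would then pursue two complementary obstructions. First, feed the data $(t_6,t_2)$ and the component count into a Hirzebruch/orbifold Bogomolov--Miyaoka--Yau inequality for conic-line arrangements in the sense of \cite{pokoraconicline}. Second, exploit the fibration directly: one computes $K_S^2=9-d^2$, $K_S\cdot F=d(d-3)=2g-2$, and for the relatively minimal fibration $\varphi$ (no fiber component is a $(-1)$-curve once $d\ge 3$) a relative canonical degree $K_{S/\mathbb{CP}^1}^2=3(d-1)(d-3)$ with $\deg\varphi_*\omega_{S/\mathbb{CP}^1}=g$, while the $3(d-1)^2$ vanishing cycles are recovered as $e(S)-2(2-2g)$. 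The plan is to obstruct the fact that all these vanishing cycles sit in only six fibers via the monodromy factorization in the mapping class group of a genus-$g$ surface: the product of the six local monodromies, each a product of $\frac{(d-1)^2}{2}$ Dehn twists, must be the identity, and I would try to show this is impossible for $d>3$. Residual small odd $d$ below any threshold so produced would be attacked with the explicit identities $\sum_P\ell(P)=6d$ and $\sum_P\binom{\ell(P)}{2}=15$, where $\ell(P)$ counts the six lines through the base point $P$, which already pin down how the lines meet $\mathcal{X}$. For the second conjecture, which drops general position, the route parallels the bound $m\le 4$ for multinets recalled in the introduction: the count of Theorem \ref{results for general (m,d) for any d} is intrinsically stuck, since its leading term $6\frac{d-1}{d}\to 6$ regardless of $\bar q$ or $p$, so \eqref{ineq: m} cannot be pushed below $m\le 6$; instead I would show that $m\ge 5$ again makes every base point an ordinary $m$-fold point, giving $t_m=d^2$ high-multiplicity points against only $O(d)$ components, and apply the same inequality.

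The main obstacle is identical in both parts, and it is exactly what keeps these statements conjectural: every purely numerical invariant of the construction is simultaneously consistent for all odd $d$. Noether's formula is an identity, the node count $3(d-1)^2$ matches, and the slope inequality reads $3(d-1)(d-3)\ge 4(g-1)$, i.e. $(d-3)^2\ge 0$ — satisfied for every $d$ and tight precisely at $d=3$, which is exactly where Ruppert's example lives; a leading-order analysis shows the crude Hirzebruch-type bounds are satisfied as well. Hence no inequality at the level of the surface invariants can separate $d=3$ from $d>3$, and the real obstruction must come either from the realizability of the specific $t_6=d^2$ sextuple-point configuration by honest lines and conics in a single pencil, or equivalently from the monodromy factorization above. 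Establishing such a second-order obstruction uniformly in $d$ is the step I do not expect to settle by routine computation.
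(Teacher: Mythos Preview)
The paper does not prove these statements; they are offered purely as conjectures, motivated only by the failure to find examples with $m\ge 5$ for $d>4$ and by Ruppert's $d=3$ example. There is no argument in the paper to compare against.

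You correctly recognize this and frame your submission as a research plan. The plan is well-informed and the computations are accurate: the node count $\tfrac{(d-1)^2}{2}$ per special fiber, the global arrangement data $t_6=d^2$ and $t_2=3(d-1)^2$, the invariants $K_S^2=9-d^2$, $K_{S/\mathbb{CP}^1}^2=3(d-1)(d-3)$, the relative minimality for $d\ge 3$, and the line-incidence identities $\sum_P\ell(P)=6d$ and $\sum_P\binom{\ell(P)}{2}=15$ all check. Your diagnosis of the difficulty is also correct and goes beyond anything the paper says: the slope inequality collapses to $(d-3)^2\ge 0$, tight exactly at Ruppert's example, so no first-order numerical inequality on $S$ can separate $d=3$ from $d>3$, and the genuine obstruction must come from the realizability of the $t_6=d^2$ incidence pattern by actual lines and conics in one pencil, or equivalently from the monodromy factorization in $\mathrm{Mod}(\Sigma_g)$. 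There is nothing to correct; your outline already exceeds what the paper attempts on these conjectures.
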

\bibliographystyle{ieeetr} 
\bibliography{Pencils.bib}
\end{document}